\documentclass[10pt]{article}
\font\smallit=cmti10
\font\smalltt=cmtt10

\usepackage{amssymb,latexsym,amsmath,epsfig,amsthm,graphicx} 

\makeatletter

\renewcommand\section{\@startsection {section}{1}{\z@}
{-30pt \@plus -1ex \@minus -.2ex}
{2.3ex \@plus.2ex}
{\normalfont\normalsize\bfseries\boldmath}}

\renewcommand\subsection{\@startsection{subsection}{2}{\z@}
{-3.25ex\@plus -1ex \@minus -.2ex}
{1.5ex \@plus .2ex}
{\normalfont\normalsize\bfseries\boldmath}}

\renewcommand{\@seccntformat}[1]{\csname the#1\endcsname. }

\makeatother

\newtheorem{lemma}{Lemma}
\newtheorem{proposition}{Proposition}

\theoremstyle{definition}

\begin{document}

\begin{center}
\uppercase{\bf \boldmath A solution to the Straus Erd\H{o}s Conjecture}
\vskip 20pt
{\bf Kyle Bradford}\\
{\smallit Unaffiliated}\\
{\tt kyle.bradford@gmail.com}\\
\end{center}
\vskip 20pt
\centerline{\smallit Received: , Revised: , Accepted: , Published: } 
\vskip 30pt

\centerline{\bf Abstract}
\noindent
This paper outlines a solution to the Straus Erd\H{o}s Conjecture.   Namely for each prime $p$  there exists positive integers $x \leq y \leq z$  so that $$ \frac{4}{p} = \frac{1}{x} + \frac{1}{y} + \frac{1}{z}. $$

\pagestyle{myheadings}
\markright{\smalltt INTEGERS: 24 (2024)\hfill}
\thispagestyle{empty}
\baselineskip=12.875pt
\vskip 30pt

\section{Part One}

For years, I have been waiting for this moment when I solved the problem.  May it bless us all with its simplicity and ease.The Straus-Erd\H{o}s conjecture has been studied and this paper gives an elementary proof.  We are seeking solutions to the diophantine equation mentioned in the abstract.

It should be clear that only one solution exists when the prime $p=2$.  This solution is $$ \frac{4}{2} = \frac{1}{1} + \frac{1}{2} + \frac{1}{2}.$$

It should also be clear that a variety of solutions exist when the prime $p \equiv 3 \bmod 4$.  This solution is derived from the greedy algorithm where you can make two unit fractions: $$ \frac{4}{p} = \frac{4}{p+1} + \frac{4}{p(p+1)}.$$

It has been shown that $p \nmid x$, $p \mid z$  and $p$  sometimes divides $y$.  The common nomenclature is to call solutions with $p \nmid y$  Type I solutions and solutions with $p \mid y$  Type II solutions.  The following propositions and lemmata will elucidate the method of proof.  We start with Type I solutions.

\begin{proposition}
Given a prime $p \equiv 1 \bmod 4$, if a Type I solutions exists, then there exists a nonnegative integer $k$  so that $$  z= \frac{(4k+3)p^{2}+p}{4}.$$
\end{proposition}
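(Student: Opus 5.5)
The plan is to pass to the standard parametrization of Type I solutions and read off $z$. Starting from $4xyz = p(xy+xz+yz)$ with $p\nmid x$, $p\nmid y$, $p\mid z$, I will write $z = pz'$ and clear denominators to get $4xyz' = xy + p(x+y)z'$. Setting $d=\gcd(x,y)$ and $x = da$, $y = db$ with $\gcd(a,b)=1$, I will use divisibility arguments to show that $z'$ is a multiple of $dab$. Writing $z' = dab\,c$ then gives $4dabc = 1 + p c\,(a+b)/\!\cdot$ in the normalized form. More precisely, the outcome I aim for is the classical identity
\begin{equation*}
x = abd,\qquad y = acd,\qquad z = abcd\,p,\qquad 4abcd = p + b + c ,
\end{equation*}
for positive integers $a,b,c,d$. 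This part is routine bookkeeping with gcds.

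With this in hand, $z = p\cdot abcd = p\,(p+b+c)/4$. Comparing with the target $z = \bigl((4k+3)p^2+p\bigr)/4 = p\bigl((4k+3)p+1\bigr)/4$, the statement is equivalent to
\begin{equation*}
b + c = (4k+2)p + 1 \quad\text{for some } k\ge 0 .
\end{equation*}
Part of this is automatic. From $4abcd = p+b+c$ and $p\equiv 1 \bmod 4$ we get $b+c\equiv 3 \bmod 4$. Hence once we know $b+c = jp+1$, it follows that $jp\equiv 2 \bmod 4$, so $j\equiv 2 \bmod 4$, i.e. $j=4k+2$ with $k\ge 0$. Positivity of $b+c$ guarantees $k\ge 0$.

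The whole content therefore lies in the congruence $b+c\equiv 1 \pmod p$, and this is the step I expect to be the main obstacle. I would first check it on small cases. For $p=5$, the solution $(2,4,20)$ gives $b+c=11$. For $p=13$, the solution $(4,18,468)$ gives $b+c=131$. For $p=17$, the solution $(5,30,510)$ gives $b+c=103$. In each case $b+c\equiv 1 \pmod p$, which makes the target plausible.

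To prove it, I would try to extract extra information mod $p$ from $4abcd \equiv b+c \pmod p$, together with $x\le y$ (i.e. $b\le c$) and the coprimality conditions. I would also try choosing the solution suitably, since the proposition only asserts existence of a $k$ given that some Type I solution exists. If the congruence fails for arbitrary Type I solutions, the fallback is to show that from any Type I solution one can produce one, for instance a minimal one or one with $a=1$, for which $b+c\equiv 1 \pmod p$ holds. I would then check the formula for that particular solution.
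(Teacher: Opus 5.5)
\textbf{The proof has a genuine gap.}

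The parametrization you start from is false, and this is not a bookkeeping slip. With $x=abd$, $y=acd$, $z=abcd\,p$ you get $\frac{1}{x}+\frac{1}{y}+\frac{1}{z}=\frac{(b+c)p+1}{abcd\,p}$, so the equation forces $4abcd=(b+c)p+1$. Combined with your relation $4abcd=p+b+c$, this gives $(b+c-1)(p-1)=0$, i.e.\ $b+c=1$, which is impossible.

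Concretely, for $(2,4,20)$ you have $abcd=z/p=4$, $b=1$, $c=2$, so $b+c=3$, not $11$. Your $11$, like your $131$ and $103$, is just $4z/p-p$. Even the corrected relation does not reach all Type I solutions: $(4,20,130)$ for $p=13$ is not of the shape $(abd,acd,abcd\,p)$. The general shape is $(abd,acd,bcd\,p)$ with $4abcd=(b+c)p+a$.

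As a result, the step you call ``the whole content'', $b+c\equiv 1 \pmod p$, is never proved. Your numerical checks only verify $4z/p\equiv 1 \pmod p$, and you even consider that it might fail and that a special solution might have to be chosen.

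The missing idea is that this congruence follows at once from the equation and holds for \emph{every} Type I solution. Write $z=pz'$ and clear denominators to get $4xyz'=p(x+y)z'+xy$. Reducing modulo $p$ and using $p\nmid xy$ gives $4z'\equiv 1 \pmod p$, so $4z'=tp+1$ for some integer $t\geq 1$. Hence $z=p(tp+1)/4=(tp^{2}+p)/4$. Your mod $4$ argument then finishes: $p\equiv 1 \bmod 4$ and $4\mid tp+1$ give $t=4k+3$ with $k\geq 0$. No $a,b,c,d$ parametrization and no choice of a minimal solution is needed.

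The paper reaches the same $t$ differently. It quotes $z=xyp/\gcd(xy,x+y)$ from earlier work and deduces $4xy-(x+y)p=\gcd(xy,x+y)$. This gives $4z=tp^{2}+p$ with $t=(x+y)/\gcd(xy,x+y)$, and it ends with the same congruence modulo $4$.
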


\begin{proof}
Let $p$  be a prime so that $p \equiv 1 \bmod 4$.  Suppose that a Type 1 solution exists with $x \leq y \leq z$.

We have from a previous paper that if a solution exists, then $$z =  \frac{xyp}{\gcd(xy,x+y)}$$

\noindent which would imply that $4xy - (x+y)p = \gcd(xy,x+y)$.  Notice then that we can write $$xyp = \frac{(x+y)p^{2} + \gcd(xy, x+y)p}{4}.$$

This would imply that $$ z=\frac{\left( (x+y) \slash \gcd(xy,x+y) \right)p^{2}+p}{4}. $$

We know that $z$  is an integer and $p \equiv 1 \bmod 4$, so $(x+y) \slash \gcd(xy,x+y) \equiv 3 \bmod 4$.    This implies that there exists a nonnegative $k$  so that $(x+y) \slash \gcd(xy,x+y) = 4k+3$.  This shows that there exists a nonnegative integer $k$  so that $$  z= \frac{(4k+3)p^{2}+p}{4}.$$
\end{proof}

Given this prime $p$  and value of $k$,  unique solutions are determined by the different ways that we can express the following fraction as the sum of two positive unit fractions:  $$ \frac{4k+3}{\left( ((4k+3)p+1) \slash 4 \right)}. $$

Instead of supposing that a solution exists, we will use this form to outline for which primes a solution exists. 

\begin{lemma}
Let $k \geq 0$, $1 \leq \ell \leq 2(4k+3)$  so that $\gcd(\ell, 4k+3) = 1$  and consider primes of the form $p \equiv n \bmod (16 \cdot \ell \cdot (4k+3) - 4 \cdot \ell^{2}) \slash (\gcd(\ell,4))^{2}$,  where $n$  is a positive integer so that $(4k+3)n \equiv -1 \bmod (16 \cdot \ell \cdot (4k+3) - 4 \cdot \ell^{2}) \slash (\gcd(\ell,4))^{2}$, then there are solutions $$ \frac{4}{p} = \frac{4(4k+3)-\ell}{(4k+3)p+1} + \frac{\ell}{(4k+3)p+1}  + \frac{4}{p((4k+3)p+1)}  .$$
\end{lemma}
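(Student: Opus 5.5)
The plan is to verify the displayed equation directly and then show that each of the three fractions on the right is a unit fraction with positive denominator. Throughout, write $m=4k+3$, $N=mp+1$, $g=\gcd(\ell,4)$ and $M=(16\ell m-4\ell^{2})/g^{2}=4\ell(4m-\ell)/g^{2}$.

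\textbf{The identity.} The first two fractions sum to $4m/N$. Adding the third term gives
$$\frac{4mp+4}{pN}=\frac{4N}{pN}=\frac{4}{p},$$
so the equation holds for every $p$. Hence the content of the lemma is integrality: we must show $(4m-\ell)\mid N$, $\ell\mid N$ and $4\mid N$. The last of these also makes $4/(pN)$ a unit fraction. Positivity is immediate, since $\ell\le 2m$ gives $4m-\ell\ge 2m>0$.

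\textbf{Existence of $n$.} Before using the hypothesis we check that $n$ exists, i.e.\ that $\gcd(m,M)=1$, since $M$ is built only from $4$, $\ell$ and $4m-\ell$.
\begin{itemize}
\item $m$ is odd, so it is coprime to the powers of $2$ in $M$.
\item $\gcd(m,\ell)=1$ by hypothesis.
\item $\gcd(m,4m-\ell)=\gcd(m,\ell)=1$.
\end{itemize}
So $m$ is invertible modulo $M$ and $n\equiv -m^{-1}\pmod M$ exists. For a prime $p\equiv n \pmod M$ we then get $N=mp+1\equiv mn+1\equiv 0\pmod M$, that is, $M\mid N$.

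\textbf{Key step: $M$ is divisible by $4$, $\ell$ and $4m-\ell$.} It then suffices to show that $\mathrm{lcm}(4,\ell,4m-\ell)$ divides $M$. This is the step I expect to need care, because of the division by $g^{2}$.
\begin{itemize}
\item Since $\gcd(\ell,m)=1$, we have $\gcd(\ell,4m-\ell)=\gcd(\ell,4m)=\gcd(\ell,4)=g$.
\item Therefore $\mathrm{lcm}(\ell,4m-\ell)=\ell(4m-\ell)/g$, and $M=(4/g)\cdot \mathrm{lcm}(\ell,4m-\ell)$, so $\ell\mid M$ and $(4m-\ell)\mid M$.
\end{itemize}
It remains to check $4\mid M$, by cases on $g$.
\begin{itemize}
\item If $g=1$, then $M=4\ell(4m-\ell)$.
\item If $g=2$, then $\ell=2a$ with $a$ odd, so $4m-\ell=2(2m-a)$. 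Hence $M=4a(2m-a)$.
\item If $g=4$, then $\ell=4a$, so $M=4\ell(4m-\ell)/16=4a(m-a)$.
\end{itemize}
In every case $4\mid M$.

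Combining $M\mid N$ with these divisibilities gives $\ell\mid N$, $(4m-\ell)\mid N$ and $4\mid N$. So all three terms are unit fractions, namely
$$\frac{1}{N/(4m-\ell)},\qquad \frac{1}{N/\ell},\qquad \frac{1}{pN/4},$$
which proves the lemma.
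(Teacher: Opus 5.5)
Your proof is correct. It takes the same direct-verification route as the paper, which only asserts that it ``should be clear'' the three fractions are unit fractions; you supply the omitted details: the identity itself, $M \mid (4k+3)p+1$ from the choice of $n$, and the fact that $4$, $\ell$ and $4(4k+3)-\ell$ all divide $M$, using $\gcd(\ell,4(4k+3)-\ell)=\gcd(\ell,4)$ and the case split on $\gcd(\ell,4)$.
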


It should be clear that for primes of these forms, these three fractions will be unit fractions.  In fact, they are designed to be unit fractions in a optimal way.  The limitation on $\ell$  guarantees that $x \leq y \leq z$  as read from left to right.  We next move to Type II solutions for sake of symmetry.

\begin{proposition}
Given a prime $p$, if a Type II solutions exists, then there exists a positive integer $k$  so that $$  x= \frac{p+(4k+3)}{4}.$$
\end{proposition}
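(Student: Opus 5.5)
The plan is to mirror the proof of the Type I proposition, this time isolating $x$ rather than $z$. Suppose $p$ is a prime with $p \equiv 1 \bmod 4$ (the case under consideration, since $p=2$ and $p \equiv 3 \bmod 4$ were already handled), and suppose a Type II solution exists with $x \leq y \leq z$. By the facts quoted before the Type I proposition, $p \nmid x$, $p \mid z$, and in a Type II solution also $p \mid y$. So I will write $y = p y'$ and $z = p z'$ with $y', z'$ positive integers.

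Substituting into $4/p = 1/x + 1/y + 1/z$ and multiplying through by $p$ gives
$$ \frac{4x - p}{x} = \frac{1}{y'} + \frac{1}{z'}. $$
Set $m = 4x - p$. Since $1/y' + 1/z' > 0$, we get $m > 0$, and $m$ is an integer. Reducing modulo $4$ and using $p \equiv 1 \bmod 4$ gives $m \equiv -p \equiv 3 \bmod 4$. Hence $m = 4k+3$ for some integer $k \geq 0$, and solving for $x$ gives
$$ x = \frac{p + (4k+3)}{4}, $$
which is the claimed form.

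The only real obstacle is the range of $k$, not the congruence argument.
\begin{itemize}
\item The argument above produces $k \geq 0$, and $k = 0$ genuinely occurs. For $p = 5$ we have $x = 2$ and $4/5 - 1/2 = 3/10 = 1/5 + 1/10$, so $(x,y,z) = (2,5,10)$ is a Type II solution with $m = 3$.
\item I would therefore prove the statement with $k$ a nonnegative integer, matching the Type I proposition. I read ``positive'' as a slip, unless one additionally excludes $m=3$.
\item I would also state the hypothesis $p \equiv 1 \bmod 4$ explicitly. For $p \equiv 3 \bmod 4$ the same computation gives $m \equiv 1 \bmod 4$, and $p + (4k+3)$ is never divisible by $4$. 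For example, with $p=3$ the Type II solution $4/3 = 1/1 + 1/6 + 1/6$ has $x = 1 \neq (3 + 4k+3)/4$ for every $k$.
\end{itemize}

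Finally, I would remark, as after the Type I proposition, that for fixed $p$ and $k$ the Type II solutions correspond exactly to the ways of writing $(4k+3)/\bigl((p+4k+3)/4\bigr)$ as a sum of two unit fractions $1/y' + 1/z'$. This sets up a Type II analogue of the lemma.
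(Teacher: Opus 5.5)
Your proof is correct and is essentially the paper's argument. The paper just cites $x \ge \lceil p/4 \rceil$, which equals $(p+3)/4$ when $p \equiv 1 \bmod 4$, so $x=(p+3)/4+k$ with $k \ge 0$; this is the same positivity-plus-residue-mod-$4$ reasoning you give via $m=4x-p$, and your two corrections are right and match the paper's own use of $k \ge 0$ and $p+3$ in Lemma 2 and its examples: $k$ must be allowed to be $0$ (witnessed by $(2,5,10)$ for $p=5$), and the hypothesis $p \equiv 1 \bmod 4$ must be stated (witnessed by $4/3=1/1+1/6+1/6$).
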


The proof of this is trivial given my previous work because we showed that integer $x \geq \left\lceil \frac{p}{4} \right\rceil$. This time, given this prime $p$  and value of $k$,  unique solutions are determined by the different ways that we can express the following fraction as the sum of two positive unit fractions:  $$ \frac{4k+3}{p\left( (p+(4k+3)) \slash 4 \right)}. $$

We will use this form to outline for which primes a solution exists. 

\begin{lemma}
Let $k \geq 0$, $1 \leq \ell \leq 2(4k+3)$   so that $\gcd(\ell, 4k+3) = 1$  and consider primes of the form $p \equiv  -(4k+3) \bmod (16 \cdot \ell \cdot (4k+3) - 4 \cdot \ell^{2}) \slash (\gcd(\ell,4))^{2}$, then there are solutions $$ \frac{4}{p} = \frac{4}{p+(4k+3)}  + \frac{4(4k+3) - \ell}{p(p+(4k+3))} + \frac{\ell}{p(p+(4k+3))}.$$
\end{lemma}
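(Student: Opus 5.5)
The plan is to check two things separately: the displayed identity, which is pure algebra, and that each of the three fractions is a unit fraction, which is where the congruence condition on $p$ comes in. Write $m = 4k+3$, $g = \gcd(\ell,4)$ and $M = (16\ell m - 4\ell^{2})/g^{2} = 4\ell(4m-\ell)/g^{2}$. The hypothesis is $M \mid p+m$.

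For the identity, the last two numerators add to $4m$. So the last two fractions sum to $4m/(p(p+m))$. Adding $4/(p+m) = 4p/(p(p+m))$ gives $4(p+m)/(p(p+m)) = 4/p$.

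For the unit-fraction property I must show three divisibilities:
\[
4 \mid p+m, \qquad (4m-\ell) \mid p(p+m), \qquad \ell \mid p(p+m).
\]
It suffices to show that $4$, $\ell$ and $4m-\ell$ each divide $M$, since $M \mid p+m$. I split on $g$.

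\emph{Case $\ell$ odd.} Here $g=1$ and $M = 4\ell(4m-\ell)$, so all three divisibilities are immediate.

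\emph{Case $g=2$.} Write $\ell = 2\ell'$ with $\ell'$ odd. Then $4m-\ell = 2(2m-\ell')$ and $M = 4\ell'(2m-\ell')$. This $M$ is divisible by $4$, by $2\ell' = \ell$, and by $2(2m-\ell') = 4m-\ell$.

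\emph{Case $g=4$.} Write $\ell = 4\ell''$. Then $4m-\ell = 4(m-\ell'')$ and $M = 4\ell''(m-\ell'')$. This $M$ is divisible by $4$, by $\ell$, and by $4m-\ell$.

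The hypothesis $\gcd(\ell,m)=1$ was only needed to make the residue condition natural; for instance $\gcd(\ell', 2m-\ell') = \gcd(\ell',2m) = 1$. The divisibility argument above does not actually require it. One must also note $4m-\ell > 0$, which holds because $\ell \le 2m$.

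It remains to check the ordering $x \le y \le z$, where
\[
x = \frac{p+m}{4}, \qquad y = \frac{p(p+m)}{4m-\ell}, \qquad z = \frac{p(p+m)}{\ell}.
\]
The inequality $y \le z$ is equivalent to $\ell \le 4m-\ell$, that is $\ell \le 2m$, which is exactly the hypothesis on $\ell$. The inequality $x \le y$ is equivalent to $4m-\ell \le 4p$. Since $p \equiv -m \pmod M$ and $p > 0$, we have $p \ge M - m$. A short estimate gives $M - m \ge m$ in every case: for example, when $g=4$ we have $\ell'' \le m/2$, so $M \ge 2m$. Hence $4p \ge 4m > 4m-\ell$.

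I expect the main obstacle to be this bookkeeping in the even-$\ell$ cases, particularly $g=4$. There one must see that dividing by $g^{2}$ still leaves enough factors of $2$ in $M$ to force $4(m-\ell'') \mid p+m$. I will do this by expressing $M$ explicitly in terms of $\ell''$ as above, rather than reasoning about $2$-adic valuations abstractly.
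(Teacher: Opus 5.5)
Your proof is correct and takes the paper's approach. The paper only asserts that these are ``clearly'' unit fractions by design of the modulus, and that the bound on $\ell$ gives the ordering. Your case split on $\gcd(\ell,4)$, showing that $4$, $\ell$ and $4m-\ell$ all divide $M = 4\ell(4m-\ell)/\gcd(\ell,4)^{2}$, together with the checks $y \le z \iff \ell \le 2m$ and $p \ge M-m \ge m$, supplies exactly the details the paper leaves out.
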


Again it should be clear that these are unit fractions.  Combining these two lemmata will help us derive a covering system.  Remember we are only considering primes $p \equiv 1 \bmod 4$.  For each $k \geq 0$  and $1 \leq \ell \leq 2(4k+3)$  such that $\gcd(\ell, 4k+3)=1$, we will find negative $4k+3$  and the negative inverse element of $4k+3$ in the modular group $\mathbb{Z} \slash ((16 \cdot \ell \cdot (4k+3) - 4 \cdot \ell^{2}) \slash (\gcd(\ell,4))^{2}) \mathbb{Z}$ and this will create our covering system.  To clarify this idea, I will consider solutions for all such $\ell$ when $k=0$.

For primes $p \equiv 29 \bmod 44$  we have solutions of the form $$ \frac{4}{p} = \frac{11}{3p+1} + \frac{1}{3p+1} + \frac{4}{p(3p+1)},$$

\noindent for primes $p \equiv 41 \bmod 44$  we have solutions of the form $$ \frac{4}{p} = \frac{4}{p+3} + \frac{11}{p(p+3)} + \frac{1}{p(p+3)},$$

\noindent for primes $p \equiv 13 \bmod 20$  we have solutions of the form $$ \frac{4}{p} = \frac{10}{3p+1} + \frac{2}{3p+1} + \frac{4}{p(3p+1)},$$

\noindent for primes $p \equiv 17 \bmod 20$  we have solutions of the form $$ \frac{4}{p} = \frac{4}{p+3} + \frac{10}{p(p+3)} + \frac{2}{p(p+3)},$$

\noindent for primes $p \equiv 5 \bmod 8$  we have solutions of the form $$ \frac{4}{p} = \frac{8}{3p+1} + \frac{4}{3p+1} + \frac{4}{p(3p+1)},$$

\noindent for primes $p \equiv 5 \bmod 8$  we have solutions of the form $$ \frac{4}{p} = \frac{4}{p+3} + \frac{8}{p(p+3)} + \frac{4}{p(p+3)},$$

\noindent for primes $p \equiv 93 \bmod 140$  we have solutions of the form $$ \frac{4}{p} = \frac{7}{3p+1} + \frac{5}{3p+1} + \frac{4}{p(3p+1)},$$

\noindent for primes $p \equiv 137 \bmod 140$  we have solutions of the form $$ \frac{4}{p} = \frac{4}{p+3} + \frac{7}{p(p+3)} + \frac{5}{p(p+3)}.$$

The last thing that we must show is that this is a covering system.  From this I notice that $5, 13, 17, 29$  are the first four primes that are $1$ modulo $4$.

\bibliographystyle{amsplain}

\end{document}